\documentclass[sn-mathphys-num]{sn-jnl}

\usepackage{graphicx}%
\usepackage{multirow}%
\usepackage{amsmath,amssymb,amsfonts}%
\usepackage{amsthm}%
\usepackage{mathrsfs}%
\usepackage[title]{appendix}%
\usepackage{xcolor}%
\usepackage{textcomp}%
\usepackage{manyfoot}%
\usepackage{booktabs}%
\usepackage{algorithm}%
\usepackage{algorithmicx}%
\usepackage{algpseudocode}%
\usepackage{listings}

\usepackage[all, cmtip]{xy}
\theoremstyle{thmstyleone}%
\newtheorem{theorem}{Theorem}
\theoremstyle{thmstyletwo}%
\newtheorem{example}{Example}%
\newtheorem{remark}{Remark}%
\newcommand{\gf}{ {{\mathbb F}} }
\newcommand{\Tr}{ {{\rm Tr}} }
\newtheorem{lemma}{Lemma}
\theoremstyle{thmstylethree}%
\newtheorem{definition}{Definition}%

\begin{document}

\title[On the inverses of permutation polynomials of the form $h(\psi(x))\varphi(x)+g(\psi(x))$ over finite fields]{On the inverses of permutation polynomials of the form $h(\psi(x))\varphi(x)+g(\psi(x))$ over finite fields}


\author*[1]{\fnm{} \sur{Danyao Wu}}\email{wudanyao111@163.com}
\author[2]{\fnm{} \sur{Pingzhi Yuan}}\email{yuanpz@scnu.edu.cn}
\author[2]{\fnm{} \sur{Xuan  Pang}}\email{pangxuan202503@163.com}


\affil[1]{\orgdiv{School of Computer Science and Technology}, \orgname{Dongguan University of Technology}, \orgaddress{
		\city{Dongguan}, \postcode{523808}, 
		\country{China}}}

\affil[2]{\orgdiv{School of Mathematics}, \orgname{South China Normal University}, \orgaddress{
		\city{Guangzhou}, \postcode{510631},
		\country{China}}}


\abstract{In this paper, we investigate the compositional inverses of permutation polynomials of the form
	\[
	F(x)=h(\psi(x))\varphi(x)+g(\psi(x)) \in \mathbb{F}_{q^n}[x],
	\]
	where \(\psi(x),\varphi(x) \in \mathbb{F}_{q^n}[x]\) are additive polynomials, \(h(x), g(x) \in \mathbb{F}_{q^n}[x]\) satisfy
	$
	h(\psi(\mathbb{F}_{q^n})) \subseteq \mathbb{F}_q^*,
	$
	and there exists a polynomial \(\bar{\psi}(x) \in \mathbb{F}_{q^n}[x]\) such that
$
	\bar{\psi}(F(x)) = \psi(x).
$  }


\keywords{finite field, compositional inverse, permutation polynomial, local method}



\maketitle

\section{Introduction}\label{sec1}
Let  $\gf_q$ be the finite field with $q$ elements,  where $q$ is a prime power, and
let $\gf_q[x]$
be the ring of polynomials in a single indeterminate $x$ over $\gf_q$. A polynomial
$f \in\gf_q[x]$ is called a {\em permutation polynomial} of $\gf_q$ if its
associated polynomial mapping $f: c\mapsto f(c)$ from $\gf_q$ to itself is a bijection. The unique polynomial denoted by $f^{-1}(x)$ over $\gf_q$
such that $f(f^{-1}(x))\equiv f^{-1}(f(x)) \equiv x \pmod{x^q-x}$ is called the compositional inverse of $f(x).$ Furthermore,  $f(x)$ is called  an involution when $f^{-1}(x)=f(x).$

Permutation polynomials over finite fields are fundamental objects in finite field algebra, with pivotal applications in coding theory, cryptography, combinatorial designs and wireless communication systems \cite{ding2013cyclic,ding2014binary,laigle2007permutation,rivest1978method,schwenk1998public,ding2006family,lidl1997finite,lidl1994introduction}. Compared with verifying permutation properties, computing the explicit compositional inverse of a permutation polynomial is generally more challenging. Only a few classical polynomial families, including monomials, linearized polynomials and Dickson polynomials, possess closed-form inverse expressions with simple algebraic structures.

Among all structured permutation polynomials, the family of the form
\begin{equation} \label{eq:general-form}
	f(x)=x))\varphi(x)+g(\psi(x))
\end{equation}
defined over $\mathbb{F}_{q^n}$ has become one of the most extensively investigated classes in the past two decades, where $\psi(x), \varphi(x) \in \mathbb{F}_{q^n}[x]$ are additive polynomials, $h(x) \in \mathbb{F}_{q^n}[x]$ is arbitrary, and $g(x) \in \mathbb{F}_{q^n}[x]$ satisfies certain polynomial constraints.

This family not only unifies a vast number of sporadic constructions scattered in the literature but also serves as a testbed for developing general permutation criteria.

Historically, research on permutation polynomials of the form \eqref{eq:general-form} has evolved along two distinct mainstream directions. The first line of inquiry restricts attention to the setting $\psi(x)=x^{q^l}+ax$ and $h(x)\equiv 1$, with $g$ instantiated in diverse forms---such as binomials $b(x^k+\delta)$, shifted power functions $b(x+\delta)^s+x$, or finite sums $\sum_i b_i(x^{t_i}+\delta_i)^{s_i}$---and $\varphi$ taken as an additive polynomial; numerous sporadic constructions were proposed in this direction (see, e.g., \cite{gupta2018further,helleseth2003new,hollmann2004kloosterman,li2023several,li2024several,li2018permutation,li2013further,liu2025several,tu2015two,tu2015permutation,wang2018general,wang2017further,wu2022some,wu2024some,xu2019complete,xu2022several,yuan2007four,yuan2008permutation,yuan2011permutation,yuan2014further,zeng2010two,zeng2017permutation,zha2012two,zha2018new,zheng2017more}). The second research branch treats $\psi$ as the field trace map from $\mathbb{F}_{q^n}$ down to $\mathbb{F}_q$ (reducing to the absolute trace when the base field is prime). A common setup within this branch fixes $h(x)\equiv 1$ and $\varphi(x)=x$, as explored in \cite{li2020some,wu2022further,zeng2015permutation}; more generalized configurations allow $\varphi,h\in\mathbb{F}_p$ with $\varphi\neq x$ additive, which were investigated in \cite{zieve2010classes}. Despite the proliferation of these scattered constructions, they lacked a unifying theoretical framework until the seminal work of Akbary, Ghioca, and Wang \cite{akbary2011constructing}.

Akbary et al. put forward the now-classical AGW criterion (See Theorem \ref{leagw} ), which provides necessary and sufficient conditions for a polynomial of the form \eqref{eq:general-form} to permute $\mathbb{F}_{q^n}$ (See Theorem \ref{thmagw1}). The criterion is as follows. Let $\psi(x),\varphi(x)\in\mathbb F_{q^n}[x]$ be additive polynomials and $\bar{\psi}(x)\in\mathbb F_{q^n}[x]$ a $q$-polynomial satisfying $\varphi\circ\psi=\bar{\psi}\circ\varphi$ and $\#\psi(\mathbb F_{q^n})=\#\bar{\psi}(\mathbb F_{q^n})$. Define $\bar{f}$ on $\psi(\mathbb F_{q^n})$ by $\bar{f}(\psi(x)):=\bar{\psi}(f(x))$. Then $f$ is a permutation polynomial if and only if $\bar{f}:\psi(\mathbb F_{q^n})\to \bar{\psi}(\mathbb F_{q^n})$ is bijective and $\ker(\psi)\cap\ker(\varphi)=\{0\}$. The AGW criterion elegantly incorporates all prior disjoint constructions into a single theoretical system, laying a solid foundation for subsequent research on both permutation properties and inverse problems.

With the permutation criterion firmly established, a natural and practically significant next step is to determine the \emph{compositional inverses} of these permutation polynomials. 
In a recent comprehensive survey, Q. Wang \cite{wang2025survey} systematized eight mainstream methodologies for computing inverses of PPs, including the experimental method, the power sum method, the matrix method, the group algebra method, the piecewise method, the decomposition method, the commutative diagram method, and the local method. 
Building upon these frameworks, researchers have made substantial progress in deriving explicit inverses for various subclasses of \eqref{eq:general-form}. Tuxanidy and Wang \cite{tuxanidy2014on} employed the decomposition method to study the inverse of the general AGW form; their approach (Theorem \ref{thmtuwaf-1}) successfully converts the inverse problem on $\mathbb{F}_{q^n}$ into that on the decomposition $\mathbb{F}_{q^n}=\psi(\mathbb{F}_{q^n})\oplus S_\psi$, where $S_\psi=\{x-\psi(x):x\in\mathbb{F}_{q^n}\}$. However, their formula requires two additional restrictions, namely $\#S_\psi=\#S_{\bar{\psi}}$ and $\ker(\varphi)\cap \psi(S_\psi)=\{0\}$, which are not part of the AGW permutation criterion. Subsequently, Niu et al. \cite{niu2020new} applied the commutative diagram method to derive the inverse for the special case $\psi(x)=x^{q^i}-x$, $g(x)=1$, and $\varphi(x)=cx$ (See Theorem \ref{propniu1}); this result, as noted in Remark \ref{remark1}, is a special case of Tuxanidy--Wang's theorem.


More recently, Reis and Wang \cite{reis2024constructing} investigated the compositional inverses of permutation polynomials for the special case where $\psi$ is taken as the $q$-associate of $g_{t,a}(x)=(x^n-1)/(x^t-a)$ with $t\mid n$ and $a\in\mathbb{F}_q^*$, i.e., a special case of the general form \eqref{eq:general-form}, using the commutative diagram method (See Theorem \ref{threis}). As demonstrated in Example \ref{example} (over $\mathbb{F}_9$), their result is not a special case of the Tuxanidy--Wang theorem; in that example, the Reis--Wang setting applies while the Tuxanidy--Wang condition $\ker(\varphi)\cap \psi(S_\psi)=\{0\}$ fails, indicating that the two frameworks are complementary rather than hierarchical. Meanwhile, the local method has also been employed to derive inverses for certain subfamilies \cite{wu2025compositionalffa2,wu2025compositionaldcc}, shifting the focus from verification after construction to systematic derivation. Nevertheless, all these existing results remain confined to specific instantiations of $\psi$ or $\varphi$, and a unified inverse formula for the full AGW form \eqref{eq:general-form} under only the original permutation conditions is still absent.

The above discussion reveals a conspicuous research gap: a unified inverse theory for the general AGW form \eqref{eq:general-form} that operates solely under the original AGW criterion, without any auxiliary assumptions, has remained elusive, as all existing formulas are merely \emph{conditional}.

In our recent survey \cite{wu2026survey}, we revisit the explicit compositional inverses of known permutation polynomials that admit closed-form inverse expressions through the lens of the local method. By re-establishing these previously scattered results within a unified local-method framework, we demonstrate that many seemingly disparate inverse formulas can in fact be derived systematically from a common principle. This not only provides a coherent retrospective of the field but also reveals the intrinsic connections among various existing approaches, offering new insights into the unified treatment of compositional inverses for permutation polynomials. However, as noted in that survey, the unified formula for the AGW-type family \eqref{eq:general-form} under the original permutation conditions was still under development at the time of its writing. 

The present paper fills this gap by presenting the complete unified theory for this specific family, as promised in that survey. Motivated by the above discussion, we systematically investigate the compositional inverses of permutation polynomials of the full form \eqref{eq:general-form} over $\mathbb{F}_{q^n}$ via the local method, and derive a unified, closed-form compositional inverse under only the two original AGW permutation conditions. To the best of our knowledge, this work completely resolves the compositional inverse problem for all permutation polynomials of the form \eqref{eq:general-form}, subsuming all previously published conditional and subclass inverse results---including those of Tuxanidy--Wang, Niu et al., and Reis--Wang---as special corollaries of our main theorem.

The rest of this paper is organized as follows. Section 2 recalls necessary preliminaries on finite fields,  the AGW criterion, the local method,  and the relevant existing inverse results. Section 3 presents our main theorem on the unified inverse formula, along with a detailed proof. 

\section{Preliminaries}
The Akbary-Ghioca-Wang (AGW) criterion \cite[Lemma 1.1]{akbary2011constructing} is an important method for constructing PPs. By providing necessary and sufficient conditions for a polynomial to be a permutation via a commutative diagram, this criterion not only unifies many classical constructions of permutation polynomials, but also yields a large number of new ones, thereby significantly advancing the study of permutation polynomials.

\begin{theorem}\label{leagw}\cite[The AGW criterion]{akbary2011constructing}
	Let $A, S$ and $\overline{S}$ be finite sets
	with $\sharp S =\sharp \overline{S}$,
	and let $f(x) : A\longrightarrow A$, $h(x): S\longrightarrow \overline{S}$, $\lambda(x): A\longrightarrow S,$ and $\overline{\lambda}(x):A\longrightarrow \overline{S}$ be maps
	such that $\overline{\lambda}(x)\circ f(x)=h(x)\circ \lambda(x).$
	If both $\lambda(x)$ and $\overline{\lambda}(x)$ are surjective,
	then the following statements are equivalent:\\
	(i) $f(x)$ is bijective (a permutation of $A$); and\\
	(ii) $h(x)$ is bijective from $S$ to $\overline{S}$ and
	$f(x)$ is injective on $\lambda^{-1}(s)$ for each $s \in S.$
\end{theorem}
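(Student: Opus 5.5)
The proof is a counting argument on the fibres of $\lambda$ and $\overline{\lambda}$. Since $\lambda$ is surjective, $A$ is the disjoint union of the nonempty fibres $\lambda^{-1}(s)$, $s\in S$. Likewise $\overline{\lambda}$ is surjective, so $A$ is the disjoint union of the nonempty fibres $\overline{\lambda}^{-1}(\bar s)$, $\bar s\in\overline{S}$. The relation $\overline{\lambda}\circ f=h\circ\lambda$ says exactly that $f$ maps $\lambda^{-1}(s)$ into $\overline{\lambda}^{-1}(h(s))$. We prove the two implications separately.

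For (i)$\Rightarrow$(ii), assume $f$ is bijective. Then $f$ is injective on every subset of $A$, in particular on each $\lambda^{-1}(s)$.
\begin{itemize}
\item \emph{$h$ is surjective.} Given $\bar s\in\overline{S}$, pick $a\in A$ with $\overline{\lambda}(a)=\bar s$. Since $f$ is onto, write $a=f(b)$. Then $h(\lambda(b))=\overline{\lambda}(f(b))=\bar s$.
\item \emph{$h$ is bijective.} Because $\sharp S=\sharp\overline{S}$ and both sets are finite, a surjection from $S$ to $\overline{S}$ is a bijection.
\end{itemize}

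For (ii)$\Rightarrow$(i), assume $h$ is bijective and $f$ is injective on each fibre $\lambda^{-1}(s)$. Since $A$ is finite, it suffices to show $f$ is injective on $A$.
\begin{itemize}
\item Suppose $f(a)=f(b)$. Applying $\overline{\lambda}$ gives $h(\lambda(a))=h(\lambda(b))$.
\item Injectivity of $h$ gives $\lambda(a)=\lambda(b)=s$, so $a$ and $b$ lie in the same fibre $\lambda^{-1}(s)$.
\item Injectivity of $f$ on that fibre gives $a=b$.
\end{itemize}
So $f$ is injective, hence a permutation of $A$.

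The only subtle point is the surjectivity step in (i)$\Rightarrow$(ii). It is the one place where the hypothesis $\sharp S=\sharp\overline{S}$ and the surjectivity of $\overline{\lambda}$ are both used, and they are needed to conclude that $h$ is bijective. The surjectivity of $\lambda$ is not essential to the argument.
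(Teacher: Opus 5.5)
Your proof is correct. Your side remark is also right: under (i), surjectivity of $\lambda$ is automatic, because $h\circ\lambda=\overline{\lambda}\circ f$ is onto and $h$ is bijective, and the direction (ii)$\Rightarrow$(i) never uses it.

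The paper does not prove this statement; it quotes it from Akbary--Ghioca--Wang. Your (ii)$\Rightarrow$(i) argument is exactly the one the paper gives for Lemma~\ref{le1}, with $h^{-1}\circ\overline{\lambda}$ in the role of $\psi$ and $\lambda$ in the role of $\varphi$. Your (i)$\Rightarrow$(ii) step is the standard surjectivity-plus-cardinality argument.
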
$$	\xymatrix{
	A  \ar[r]^{f(x)} \ar[d]_{\lambda(x)} & A  \ar[d]^{\bar{\lambda}(x)}\\
	S \ar[r]_{h(x)} &  \bar{S}
}
$$

	Akbary et al. \cite{akbary2011constructing} investigated the permutation properties of polynomials of the form \eqref{eq:general-form} using the AGW criterion and established the following result.

\begin{theorem}\cite[Theorem 5.1]{akbary2011constructing}\label{thmagw1}
	Let $\psi(x), \varphi(x)\in \gf_{q^n} $  be additive polynomials and $\bar{\psi}(x)\in \gf_{q^n}[x]$ be a $q$-polynomial  satisfying $\varphi(x)\circ \psi(x)=\bar{\psi}(x) \circ \varphi(x)$ and $\sharp\psi(\gf_{q^n})=\sharp\bar{\psi}(\gf_{q^n})$. Let $h(x) \in \gf_{q^n}[x]$ be any polynomial such that $h\left(\psi(\gf_{q^n})\right)\subseteq \gf_q^*$, and let $g(x) \in \gf_{q^n}[x]$ be any polynomial.  Then 
	$$f(x)=h(\psi(x))\varphi(x)+g(\psi(x))$$ permutes $\gf_{q^n}$ if and only if \\
	(i) ${\rm ker}(\psi(x))\cap {\rm ker}(\varphi(x))=\{0\}; $ and\\
	(ii) $\bar{f}(x)= h(x)\varphi(x)+\bar{\psi}(g(x))$ is a bijection from $\psi(\gf_{q^n})$ to $\bar{\psi}(\gf_{q^n}).$
\end{theorem}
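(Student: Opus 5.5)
We apply Theorem \ref{leagw} with $A=\gf_{q^n}$, $S=\psi(\gf_{q^n})$, $\overline{S}=\bar{\psi}(\gf_{q^n})$, $\lambda=\psi$, $\overline{\lambda}=\bar{\psi}$, and the middle map $\bar f(x)=h(x)\varphi(x)+\bar{\psi}(g(x))$ restricted to $S$. The hypothesis $\sharp\psi(\gf_{q^n})=\sharp\bar{\psi}(\gf_{q^n})$ gives $\sharp S=\sharp\overline{S}$, and $\lambda,\overline{\lambda}$ are surjective by definition. So the work is in checking the commutativity $\bar{\psi}\circ f=\bar f\circ\psi$ on $\gf_{q^n}$, and then identifying condition (ii) of Theorem \ref{leagw} with conditions (i) and (ii) of the statement.

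For commutativity, take $x\in\gf_{q^n}$ and write $c=h(\psi(x))\in\gf_q^*$. Since $\bar{\psi}$ is a $q$-polynomial, it is $\gf_q$-linear, so
\[
\bar{\psi}(f(x))=c\,\bar{\psi}(\varphi(x))+\bar{\psi}(g(\psi(x))).
\]
By $\bar{\psi}\circ\varphi=\varphi\circ\psi$, this equals $h(\psi(x))\varphi(\psi(x))+\bar{\psi}(g(\psi(x)))=\bar f(\psi(x))$. The only delicate point is the order of composition in the hypothesis. I read it as $\bar{\psi}\circ\varphi=\varphi\circ\psi$, which is what the computation needs and is the standard AGW form. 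Commutativity also shows that $\bar f$ maps $S$ into $\overline{S}$.

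Next, injectivity on fibres. Fix $s\in S$ and take $x,y$ with $\psi(x)=\psi(y)=s$. Then $f(x)-f(y)=h(s)\,(\varphi(x)-\varphi(y))=h(s)\varphi(x-y)$, and $h(s)\neq0$. Hence $f$ is injective on $\psi^{-1}(s)$ if and only if no nonzero $z\in\ker\psi$ satisfies $\varphi(z)=0$, using that $x-y$ ranges over $\ker\psi$ as $y$ ranges over the fibre $x+\ker\psi$. The condition is the same for every $s$, and it is exactly $\ker(\psi)\cap\ker(\varphi)=\{0\}$.

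Combining these with the equivalence (i)$\Leftrightarrow$(ii) of Theorem \ref{leagw} gives the result: $f$ permutes $\gf_{q^n}$ if and only if $\bar f$ is a bijection from $S$ to $\overline{S}$ and the kernel condition holds. The main obstacle is only bookkeeping: making sure $\gf_q$-linearity is used, so that the scalar $h(\psi(x))$ can be pulled through $\bar{\psi}$, and that the composition identity is applied in the correct order.
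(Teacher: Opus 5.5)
Your argument is correct and follows the route by which the paper's source \cite{akbary2011constructing} obtains this result: apply Theorem \ref{leagw} with $\lambda=\psi$ and $\overline{\lambda}=\bar{\psi}$, verify $\bar{\psi}\circ f=\bar f\circ\psi$ using the $\gf_q$-linearity of $\bar{\psi}$, and reduce fibre-injectivity to $\ker(\psi)\cap\ker(\varphi)=\{0\}$, which is the same characterization the paper quotes in the proof of Theorem \ref{thmain}. Your concern about composition order is unnecessary, because the hypothesis $\varphi\circ\psi=\bar{\psi}\circ\varphi$ is exactly the identity $\bar{\psi}(\varphi(x))=\varphi(\psi(x))$ that you use.
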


The commutative diagram for the above permutation polynomial is as follows.
$$
\xymatrix@C=3cm@R=1cm{
	\gf_{q^n}  \ar[r]^{f(x)=h(\psi(x))\varphi(x)+g(\psi(x))} \ar[d]_{\psi(x)} & 	\gf_{q^n}  \ar[d]^{\bar{\psi}(x)}\\
	\psi(\gf_{q^n}) \ar[r]_{\bar{f}(x)=h(x)\varphi(x)+\bar{\psi}(g(x))} &  \bar{\psi}(\gf_{q^n})
}
$$ 

 Tuxanidy and Wang \cite{tuxanidy2017compositional} studied the compositional inverses of the permutation polynomials in Theorem \ref{thmagw1} by the decomposition method. 
\begin{theorem}\cite[Theorem 1.2]{tuxanidy2014on}\label{thmtuwaf-1}
	Using the same notations and assumptions of Theorem \ref{thmagw1}, assume that $f(x)$ is a permutation of $\gf_{q^n}$, and further assume that $\sharp S_{\psi}=\sharp S_{\bar{\psi}}$ and $\ker(\varphi)\cap \psi(S_{\psi})=\{0\}.$ Then $\varphi$ induces a bijective from $S_{\psi}$	to $S_{\bar{\psi}}.$ Let $\bar{f}^{-1}$, $\varphi^{-1} |_{S_{\bar{\psi}}} \in \gf_{q^n}[x]$
	induce the inverses of $\bar{f}|_{\psi(\gf_{q^n})}$ and $\varphi|_{S_{\psi}}$, respectively. Then the compositional inverse of $f(x)$ over $\gf_{q^n}$ is given by 
	$$f^{-1}(x)=\bar{f}^{-1}(\bar{\psi}(x))+\varphi^{-1}|_{S_{\psi}}\left(\frac{x-\bar{\psi}(x)-g(\bar{f}^{-1}(\bar{\psi}(x)))+\bar{\psi}(g(\bar{f}^{-1}(\bar{\psi}(x))))}{h(\bar{f}^{-1}(\bar{\psi}(x)))}\right).$$ 
	Furthermore, if $\varphi$ induces a bijection from $\psi(\gf_{q^n})$ to $\bar{\psi}(\gf_{q^n}),$ then $\varphi$ permutes $\gf_{q^n}$ and the compositional inverse of $f(x)$ over $\gf_{q^n}$ is given by 
	$$f^{-1}(x)=\varphi^{-1}\left(\frac{x-g(\bar{f}^{-1}(\bar{\psi}(x)))}{h(\bar{f}^{-1}(\bar{\psi}(x)))}\right).$$ 
\end{theorem}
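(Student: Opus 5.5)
We would prove Theorem \ref{thmtuwaf-1} by verifying directly that the proposed map is a left inverse of $f$ on $\gf_{q^n}$. Since $f$ is a bijection of the finite set $\gf_{q^n}$, a left inverse is automatically the compositional inverse. The whole proof rests on the direct-sum decomposition $\gf_{q^n}=\psi(\gf_{q^n})\oplus S_\psi$.

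\textbf{Step 1: $\varphi$ restricts to a bijection $S_\psi\to S_{\bar\psi}$.} For $y\in\gf_{q^n}$ we have $\varphi(y-\psi(y))=\varphi(y)-\bar\psi(\varphi(y))\in S_{\bar\psi}$, so $\varphi(S_\psi)\subseteq S_{\bar\psi}$. For injectivity, suppose $s\in S_\psi\cap\ker\varphi$. Then $\psi(s)\in \psi(S_\psi)\cap\ker\varphi$, because $\varphi(\psi(s))=\bar\psi(\varphi(s))=0$. The hypothesis $\ker(\varphi)\cap\psi(S_\psi)=\{0\}$ then gives $\psi(s)=0$, so $s\in\ker\psi\cap\ker\varphi=\{0\}$ by condition (i) of Theorem \ref{thmagw1}. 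Finally, $\sharp S_\psi=\sharp S_{\bar\psi}$ upgrades injectivity to bijectivity. (Implicitly $S_\psi$ is an additive subgroup, since $\psi$ is additive.)

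\textbf{Step 2: identify the $\psi$-component of $x$.} Fix $x$ and put $y=f(x)$. The commutative diagram gives $\bar\psi(y)=\bar f(\psi(x))$, hence $\bar f^{-1}(\bar\psi(y))=\psi(x)$. Abbreviate $u=\psi(x)$. Then $h(u)\in\gf_q^*$ and $g(u)$ are known in terms of $y$.

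\textbf{Step 3: recover the $S_\psi$-component of $x$.} Write $x=\psi(x)+(x-\psi(x))=u+s$ with $s\in S_\psi$. Using $y=h(u)\varphi(x)+g(u)$, the $\gf_q$-linearity of $\bar\psi$, and $\bar\psi\circ\varphi=\varphi\circ\psi$, we compute
\[
y-\bar\psi(y)-g(u)+\bar\psi(g(u)) = h(u)\bigl(\varphi(x)-\varphi(\psi(x))\bigr)=h(u)\varphi(s).
\]
Here $h(u)\in\gf_q$ commutes with $\bar\psi$. Dividing by $h(u)$ and applying $\varphi^{-1}|_{S_{\bar\psi}}$, which is legitimate because $\varphi(s)\in S_{\bar\psi}$, yields $s$. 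The formula as stated should then return $u+s$. The main obstacle is exactly this point: the formula adds $\bar f^{-1}(\bar\psi(y))=\psi(x)$ to $s=x-\psi(x)$, and this sum equals $x$ by construction, with no projection idempotency needed. So the only real checks are the well-definedness in Step 1 and the claim that the argument of $\varphi^{-1}$ lies in $S_{\bar\psi}$.

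\textbf{Step 4: the second statement.} If $\varphi$ maps $\psi(\gf_{q^n})$ bijectively onto $\bar\psi(\gf_{q^n})$, we combine this with Step 1 and the decompositions to see that $\varphi$ is injective on $\gf_{q^n}$, hence permutes it; this uses that the sums are direct on both sides. Then from $y=h(u)\varphi(x)+g(u)$ we solve directly to get $x=\varphi^{-1}\bigl((y-g(u))/h(u)\bigr)$ with $u=\bar f^{-1}(\bar\psi(y))$.
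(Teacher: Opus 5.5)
The paper quotes this theorem from Tuxanidy--Wang without proof, so there is no in-paper argument to compare with. Judged on its own merits, your Steps 1--3 are correct. They already give a complete proof of the bijectivity claim and of the first inverse formula. Step 3 uses only the identity $x=\psi(x)+(x-\psi(x))$, i.e.\ $\gf_{q^n}=\psi(\gf_{q^n})+S_\psi$, and never that this sum is direct. Your opening sentence therefore claims more than you need, and more than is true.

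The genuine gap is in Step 4. You deduce that $\varphi$ is injective from the assertion that $\psi(\gf_{q^n})+S_\psi$ and $\bar\psi(\gf_{q^n})+S_{\bar\psi}$ are direct sums. That is neither a hypothesis nor a consequence of the hypotheses.

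Take $\varphi(x)=x$, $\psi=\bar\psi=\Tr_{q^2/q}$, $h=1$ and $g=0$ over $\gf_{q^2}$. Every assumption of the theorem holds, including the extra one in the ``furthermore'' part. Yet $x-\psi(x)=-x^q$, so $S_\psi=S_{\bar\psi}=\gf_{q^2}\supseteq\gf_q=\psi(\gf_{q^2})$. Without directness, $\varphi(\psi(z))+\varphi(z-\psi(z))=0$ does not force both summands to vanish, so your argument stalls.

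The fix is short and bypasses Step 1 and the decomposition entirely. If $\varphi(z)=0$, then $\varphi(\psi(z))=\bar\psi(\varphi(z))=0$. Injectivity of $\varphi$ on $\psi(\gf_{q^n})$ then gives $\psi(z)=0$. Hence $z\in\ker\psi\cap\ker\varphi=\{0\}$ by condition (i) of Theorem~\ref{thmagw1}. Once $\varphi$ is known to permute $\gf_{q^n}$, your final computation solving $y=h(u)\varphi(x)+g(u)$ for $x$ is correct.
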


Under the  conditions $\psi(x)=x^{q^i}-x,$  $g(x)=1$, $\varphi(x)=cx,$ and $h(x) \in \gf_{q^m}[x]$ (where  $m, i$ are  positive integers such that $1\leq i\leq m-1$, $l=\gcd(i ,m),$ and $c \in \gf_{q^l}^*$), 
Niu et al. \cite{niu2020new} applied the commutative diagram method to determine the compositional inverse of the permutation polynomial of the form $f(x)=g\left(x^{q^i}-x+\delta\right)+cx $ over $\gf_{q^m}$, obtaining the following result.

\begin{theorem}\cite[Theorem 3.7]{niu2020new}\label{propniu1}
	Let $q$ be a prime power, $m, i$ be positive integers with $1\leq i\leq m-1$, $l=\gcd(i ,m),$ $c \in \gf_{q^l}^*$, and $g(x) \in \gf_{q^m}[x] $ such that $h(x)=g(x)^{q^i}-g(x)+cx+(1-c)\delta\in \gf_{q^m}[x]$ permutes $\gf_{q^m},$ where $\delta \in \gf_{q^m}.$ Assume $H(x)$ is the compositional inverse of $h(x)$.  Then for any $\delta \in \gf_{q^m}$, $f(x)=g\left(x^{q^i}-x+\delta\right)+cx \in \gf_{q^m}[x]$ is a permutation polynomial over $\gf_{q^m}$ and the compositional inverse of $f(x)$ over $\gf_{q^m}$ is  $$f^{-1}(x)=c^{-1}x^{q^i}+c^{-1}g\left(H(x^{q^i}-x+\delta)\right)^{q^i}-H(x^{q^i}-x+\delta)+\delta.$$	
\end{theorem}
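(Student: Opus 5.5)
The plan is to verify directly the commutative diagram underlying the statement, with the affine map $\lambda(x)=x^{q^i}-x+\delta$ playing the role of $\psi$. Write $y=\lambda(x)$. Since $c\in\gf_{q^l}^*$ with $l=\gcd(i,m)$, we have $c^{q^i}=c$. Raising $f(x)=g(y)+cx$ to the $q^i$-th power and subtracting therefore gives
$$\lambda(f(x))=g(y)^{q^i}-g(y)+c\,(x^{q^i}-x)+\delta .$$
Substituting $x^{q^i}-x=y-\delta$ turns this into $g(y)^{q^i}-g(y)+cy+(1-c)\delta=h(y)$. So we get the key identity $\lambda\circ f=h\circ\lambda$ on $\gf_{q^m}$. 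This is the only real computation, and it is where the hypothesis $c\in\gf_{q^l}$ is used.

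\textbf{Permutation property.} Suppose $f(x_1)=f(x_2)$. Applying $\lambda$ gives $h(\lambda(x_1))=h(\lambda(x_2))$. Since $h$ permutes $\gf_{q^m}$, this forces $\lambda(x_1)=\lambda(x_2)$, and hence $g(\lambda(x_1))=g(\lambda(x_2))$. Then $cx_1=cx_2$, and $c\neq0$ gives $x_1=x_2$. So $f$ is injective, hence bijective on the finite field. (Equivalently, one can apply Theorem~\ref{leagw} with $\lambda=\bar\lambda$: $f$ is injective on each fibre $\lambda^{-1}(s)$, because there $f(x)=g(s)+cx$.)

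\textbf{Inverse.} Let $z=f(x)$. The identity above gives $h(y)=\lambda(z)$, so $y=H(z^{q^i}-z+\delta)$ is expressed in terms of $z$ alone. From $z=g(y)+cx$ we obtain the simple expression
$$x=c^{-1}\bigl(z-g(y)\bigr).$$
To reach the shape displayed in the theorem, raise $z=g(y)+cx$ to the $q^i$-th power, which gives $c\,x^{q^i}=z^{q^i}-g(y)^{q^i}$. Then use $x=x^{q^i}-y+\delta$. This yields
$$x=c^{-1}z^{q^i}-c^{-1}g(y)^{q^i}-y+\delta .$$
As a consistency check, the two expressions for $x$ agree: their difference equals $c^{-1}\bigl(\lambda(z)-\delta-(g(y)^{q^i}-g(y))\bigr)-y+\delta$. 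Using $\lambda(z)=h(y)=g(y)^{q^i}-g(y)+cy+(1-c)\delta$, this reduces to $0$.

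\textbf{Obstacle: the sign of the $g$-term.} The step needing care is not structural but the sign bookkeeping in the last display. My derivation gives $-c^{-1}g(H(\cdot))^{q^i}$, whereas the statement as printed has $+$. The two coincide in characteristic $2$. In general I would prove the version with the minus sign, or equivalently the cleaner formula $c^{-1}\bigl(x-g(H(x^{q^i}-x+\delta))\bigr)$. I would then point out that the printed sign should be corrected or read accordingly.
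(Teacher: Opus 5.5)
Your argument is correct and is essentially the commutative-diagram argument ($\lambda\circ f=h\circ\lambda$ with $\lambda(x)=x^{q^i}-x+\delta$) that the paper attributes to Niu et al.; the paper gives no proof of its own beyond Remark~\ref{remark1}, and indeed the second formula of Theorem~\ref{thmtuwaf-1}, applied with $\varphi(x)=cx$, trivial multiplier and $\delta$ absorbed into $g$, returns exactly your cleaner expression $c^{-1}\bigl(x-g(H(x^{q^i}-x+\delta))\bigr)$. Your sign correction is also right and not cosmetic: for $q=3$, $m=2$, $i=1$, $c=1$, $\delta=0$, $g\equiv 1$ one gets $f(x)=x+1$ and $H(x)=x$, and the printed formula yields $x+1$ instead of $f^{-1}(x)=x-1$, so in odd characteristic the term must read $-c^{-1}g\bigl(H(x^{q^i}-x+\delta)\bigr)^{q^i}$.
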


The commutative diagram for the above permutation polynomial is as follows.
$$
\xymatrix@C=3cm@R=1cm{
	\gf_{q^n}  \ar[r]^{f(x)=g\left(x^{q^i}-x+\delta\right)+cx} \ar[d]_{x^{q^i}-x+\delta} & 	\gf_{q^n}  \ar[d]^{x^{q^i}-x+\delta}\\
	{\rm Im}(x^{q^i}-x+\delta)  \ar[r]_{h(x)=g(x)^{q^i}-g(x)+cx+(1-c)\delta} &  	{\rm Im}(x^{q^i}-x+\delta) 
}
$$
\begin{remark}\label{remark1}
	By virtue of the relationship between affine $q$-polynomials and additive polynomials, Theorem \ref{propniu1} is merely a special case of Theorem \ref{thmtuwaf-1}.	\end{remark}
 In 2021, Reis and Wang \cite{reis2021permutation} investigated the compositional inverses of permutation polynomials of the form \eqref{eq:general-form} under more specific conditions. Specifically, they explored the compositional inverses  when $\psi(x)=\Tr_{q^n/q}(x)$  and $\varphi(x)$ is the linearized $q$-associate of $k(x)$ with  $k(x) \in \gf_q[x]$.  Recently, Reis and Wang \cite{reis2024constructing} refined their results   by studying $\psi(x)$ as the $q$-associate of $g_{t,a}(x)=(x^n-1)/(x^t-a)$ with $t|n$ and $a \in \gf_q^*$ with $ a^{n/t}=1.$  By using the AGW criterion, the compositional inverse of $f(x)$ was transformed into the compositional inverse of a certain related function over the sub-field, and thus the compositional inverse of $f(x)$  was constructed. They gave the following result.  
\begin{theorem}\cite[Theorem 3.2]{reis2024constructing} \label{threis}
	Let $n, t, a$ be defined as before and $\delta$ be a nonzero root of $x^{q^t}-ax$. Let $U_{t,a}=\delta \cdot \gf_{q^t}=\{\delta y \mid y \in \gf_{q^t}\}.$ The polynomial $$P(x)=f(L_{g_{t, a}}(x))+k(L_{g_{t, a}}(x))\cdot L_h(x)\in \gf_{q^n}$$ with $h \in \gf_q[x]$ and $k(U_{t,a})\subseteq \gf_q^*$ is a PP if and only if the following conditions holds:
	
	{\rm (1)} $\gcd(h(x), (x^n-1)/(x^t-a))=1;$
	
	{\rm (2)} $Q_{t ,a}(x)= T_{t, a}[f](x)+\delta^{-1}k(\delta x)\cdot L_h(\delta x) \in \gf_{q^t}$ is a PP over $\gf_{q^t},$  where 
	$T_{t, a}[f](x)=a^{-1}\sum_{i=0}^d \Tr_{q^n /q^t}(\delta^{i-1}a_i)x^i \in \gf_{q^t}[x]$ and $a_i$s are the coefficients of $f$ in $x^i.$
	
	In affirmative case, if $Q^{-1}_{t ,a}$ is the inverse of $Q_{t ,a}$ over $\gf_{q^t},$ then the inverse PP of $P(x)$ over $\gf_{q^n}$ is given by 
	$$P^{-1}(x)=F(L_{g_{t, a}}(x))+k\left(\delta^{-1}Q^{-1}_{t, a}(\delta^{-1}(L_{g_{t, a}}(x)))\right)^{q-2}\cdot L_H(x),$$
	where $H \in \gf_q[x]$ and $F(x) \in \gf_{q^n}$ are given as follows:
	
	{\rm (i)} if $p \mid (n/t),$ then $H(x) \in \gf_q$ is the unique polynomial of degree at most $n-1$ such that $h(x)\cdot H(x) \equiv 1 \pmod{x^n-1}
	$ and $F$ is any polynomial satisfying $$F(x)\equiv -k(\delta Q^{-1}_{t,a}(\delta^{-1}x))^{q-2}\cdot L_H(f
	(\delta Q^{-1}_{t, a}(\delta^{-1}x))) \pmod{x^{q^t}-ax};$$	
	
	{\rm (ii)}  if $p \nmid (n/t),$ then $H(x) \in \gf_q$ is the unique polynomial of degree at most $n-2$ such that $h(x)\cdot H(x) \equiv 1 \pmod{\frac{x^n-1}{x^t-a}}
	$ and $F$ is any polynomial satisfying
	$F(x)\equiv M (Q^{-1}_{t, a}(\delta^{-1}x)) \pmod{x^{q^t}-ax},$ where $$M(x)=-k(\delta x)^{q-2}\cdot L_H(f(\delta x))- (at/n)L_{hH-1}(\delta x).$$ 
	
\end{theorem}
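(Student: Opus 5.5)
The plan is to read Theorem \ref{threis} as the instance of Theorem \ref{thmagw1} with $\psi=\bar{\psi}=L_{g_{t,a}}$, $\varphi=L_h$, $g=f$ and multiplier $k$, and then to invert $P$ by the local method: recover $L_{g_{t,a}}(x)$ first, and then $x$ itself.

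\textbf{Step 1: the diagram.} Since $g_{t,a},h\in\gf_q[x]$, their $q$-associates commute, so $L_h\circ L_{g_{t,a}}=L_{g_{t,a}}\circ L_h$ and we may take $\bar{\psi}=\psi$. From $(x^t-a)g_{t,a}(x)=x^n-1$ we get $L_{x^t-a}\circ L_{g_{t,a}}=x^{q^n}-x$. Hence $\mathrm{Im}(L_{g_{t,a}})\subseteq\ker(x^{q^t}-ax)=U_{t,a}$, and a degree count upgrades this inclusion to equality. So $\mathrm{Im}(L_{g_{t,a}})=U_{t,a}=\delta\gf_{q^t}$.

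\textbf{Step 2: the two AGW conditions.} For $q$-associates of divisors of $x^n-1$ one has $\ker L_{g_{t,a}}\cap\ker L_h=\ker L_{\gcd(g_{t,a},h)}$, so condition (i) of Theorem \ref{thmagw1} is exactly (1). For condition (ii), take $y=\delta z$ with $z\in\gf_{q^t}$. On $U_{t,a}$ the Frobenius $x\mapsto x^{q^t}$ acts as multiplication by $a$, so $L_{g_{t,a}}$ restricted to $U_{t,a}$ can be written through the trace $\Tr_{q^n/q^t}$. Expanding $L_{g_{t,a}}(f(\delta z))=\sum_i L_{g_{t,a}}(a_i\delta^i)z^i$ should give $\delta\, T_{t,a}[f](z)$. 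Therefore $\bar{f}(\delta z)=\delta\,Q_{t,a}(z)$, and bijectivity of $\bar f$ on $U_{t,a}$ is equivalent to (2). I expect this trace identity, with the correct normalising factor $a^{-1}$, to need the most careful bookkeeping in this part.

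\textbf{Step 3: inverting $P$.} Suppose $P(x)=w$ and put $y=L_{g_{t,a}}(x)\in U_{t,a}$. Commutativity of the diagram gives $L_{g_{t,a}}(w)=\delta Q_{t,a}(\delta^{-1}y)$, so
\[
y=\delta Q_{t,a}^{-1}\bigl(\delta^{-1}L_{g_{t,a}}(w)\bigr).
\]
Since $k(y)\in\gf_q^*$, we have $k(y)^{-1}=k(y)^{q-2}$, and therefore $L_h(x)=k(y)^{q-2}\bigl(w-f(y)\bigr)$. We then recover $x$ from $L_h(x)$ and $y$, splitting into two cases.

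In case (i), $p\mid n/t$, so $x^t-a$ and $g_{t,a}$ share all their roots. Then (1) forces $\gcd(h,x^n-1)=1$, so $L_H$ inverts $L_h$ on all of $\gf_{q^n}$. Applying it gives
\[
x=k(y)^{q-2}\bigl(L_H(w)-L_H(f(y))\bigr).
\]
Because $k(y)^{q-2}\in\gf_q$ commutes with $L_H$, and $y$ is a function of $L_{g_{t,a}}(w)$, this is exactly $F(L_{g_{t,a}}(w))+k(\cdots)^{q-2}L_H(w)$ with $F$ as stated.

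In case (ii), $p\nmid n/t$, and $H$ inverts $h$ only modulo $g_{t,a}$, so we write $hH-1=g_{t,a}m$. Applying $L_H$ gives
\[
L_H(L_h(x))=x+L_m(y).
\]
Since $y\in U_{t,a}$, $L_m(y)$ depends only on $m\bmod(x^t-a)$, and it must be expressed as a function of $y$ alone.

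\textbf{Main obstacle.} The hardest step is this last one: identifying $L_m(y)$ with the correction term $(at/n)L_{hH-1}(\delta x)$ of $M$, evaluated at $x=Q_{t,a}^{-1}(\delta^{-1}L_{g_{t,a}}(w))$. My first attempt will use that $g_{t,a}$ restricted to $U_{t,a}$ acts as the scalar $g_{t,a}$ evaluated at the "eigenvalue" $a$ of $x^t$. This scalar is $(n/t)a^{-1}$, which is nonzero precisely when $p\nmid n/t$. It lets us write $L_m(y)=(at/n)L_{hH-1}(y)$, and after substituting $y=\delta(\cdot)$ this should match the form of $M$.
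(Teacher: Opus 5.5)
Your proposal is correct. The paper gives no proof of this statement: it is quoted from Reis and Wang, and the paper indicates their argument only through the two-level diagram (AGW criterion, then the rescaling $y\mapsto\delta^{-1}y$ from $U_{t,a}$ to $\gf_{q^t}$, then the commutative diagram method).

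Your Steps 1--2 are that same AGW reduction. The bookkeeping you worry about is settled by two facts:
\begin{itemize}
\item $L_{g_{t,a}}$ involves only $q^{tj}$-th powers, so it is $\gf_{q^t}$-linear.
\item $\delta^{q^{tj}}=a^j\delta$ together with $a^{n/t}=1$ gives $L_{g_{t,a}}(c)=a^{-1}\delta\,\Tr_{q^n/q^t}(\delta^{-1}c)$ for every $c\in\gf_{q^n}$.
\end{itemize}
You need this on all of $\gf_{q^n}$, not just on $U_{t,a}$, because $a_i\delta^i\notin U_{t,a}$ in general.

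Where you genuinely differ is Step 3. Instead of a diagram chase, you invert through the identity $x=L_H(L_h(x))-L_m(L_{g_{t,a}}(x))$, which comes from $Hh-m\,g_{t,a}=1$. This is exactly the paper's Theorem \ref{thmain} with $\psi=L_{g_{t,a}}$, $\varphi=L_h$, $u=L_H$, $v=-L_m$ and $\bar{\psi}(w)=\delta Q_{t,a}^{-1}(\delta^{-1}L_{g_{t,a}}(w))$. The one difference is that your $u,v$ come from the ordinary B\'ezout identity in $\gf_q[x]$ rather than Ore's symbolic algorithm (Lemma \ref{lekerpp}), which makes them explicit. So your argument actually carries out the reduction of Reis--Wang to the main theorem, which the paper claims but never writes down.

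It also clarifies the case split. The B\'ezout identity exists under (1) alone, so
$P^{-1}(w)=k(y)^{q-2}\bigl(L_H(w)-L_H(f(y))\bigr)-L_m(y)$, with $y=\delta Q_{t,a}^{-1}(\delta^{-1}L_{g_{t,a}}(w))$, holds uniformly. The two cases only reflect how the quotient $m$ is eliminated:
\begin{itemize}
\item When $p\nmid n/t$, you use the nonzero scalar $(n/t)a^{-1}$ by which $L_{g_{t,a}}$ acts on $U_{t,a}$.
\item When $p\mid n/t$, that scalar is $0$. Instead one chooses $H$ modulo $x^n-1$, so that $m=(x^t-a)r$ and $L_m$ vanishes on $U_{t,a}$.
\end{itemize}

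One small point: write the argument of $k$ explicitly rather than as $k(\cdots)$. Your derivation gives $k\bigl(\delta Q_{t,a}^{-1}(\delta^{-1}L_{g_{t,a}}(w))\bigr)^{q-2}$, which is consistent with $F$ in (i) and $M$ in (ii). So the $\delta^{-1}Q_{t,a}^{-1}$ in the displayed formula for $P^{-1}$ is a misprint. Similarly, the degree bound $n-2$ in (ii) gives uniqueness only when $t=1$. This does not affect you, since your argument works for any $H$ with $hH\equiv 1\pmod{g_{t,a}}$.
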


The commutative diagram for the above permutation polynomial is as follows.
$$
\xymatrix@C=4.5cm@R=1cm{
	\gf_{q^n}  \ar[r]^{f(x)=f(L_{g_{t, a}}(x))+k(L_{g_{t, a}}(x))\cdot L_h(x)} \ar[d]_{L_{g_{t, a}}(x)} & 	\gf_{q^n}  \ar[d]^{L_{g_{t, a}}(x)}\\
	U_{t,a} \ar[r]^{N(x)=\delta T_{t, a}[f](\delta^{-1}x)+k(x)\cdot L_h(x)}  \ar[d]_{\delta^{-1}x} &  
	U_{t,a} \ar[d]^{\delta^{-1}x}\\
	\gf_{q^t}  \ar[r]_{Q_{t ,a}(x)= T_{t, a}[f](x)+\delta^{-1}k(\delta x)\cdot L_h(\delta x)} & \gf_{q^t}	
}
$$

\begin{example}\label{example}
	Let $q=3$, $n=2$, $t=1$, $a=1$, and take $\delta=1$, which is a nonzero root of $x^3-x$.
	Set $f(y)=y$, $k(y)\equiv 1$ and $h(x)=x-1$ in Theorem~\ref{threis}.
	Then we have $L_{g_{t,a}}(x)=x^3+x$ and $L_h(x)=x^3-x$, so that
	\[
	P(x)=L_{g_{t,a}}(x)+L_h(x)=2x^3=-x^3
	\]
	is a permutation polynomial over $\mathbb{F}_9$ according to Theorem~\ref{threis}.
	
	Furthermore, we have
	\[
	\begin{cases}
		\mathbb{F}_3 \subseteq \operatorname{Im}\bigl(L_{g_{t,a}}(x-L_{g_{t,a}}(x))\bigr),\\
		\mathbb{F}_3 \subseteq \ker L_h,
	\end{cases}
	\]
	which implies that the intersection $\operatorname{Im}\bigl(L_{g_{t,a}}(x-L_{g_{t,a}}(x))\bigr)\cap \ker L_h$ contains nonzero elements.
	
	This example verifies that the condition $\ker(\varphi)\cap \psi(S_{\psi})=\{0\}$ in Theorem~\ref{thmtuwaf-1} fails to hold in this case.
	Consequently, Theorem~\ref{threis} cannot be regarded as a special case of Theorem~\ref{thmtuwaf-1}.
\end{example}
 
 Next, we recall the local criterion. 
\begin{lemma}\label{yuanlocal}\cite[Local criterion]{yuan2024local}
	Let $A$ and $S$ be  finite sets and let $f(x) : A\longrightarrow A$ be a map. Then $f(x)$ is a bijection if and only if for any surjection 
	$\psi(x): A\longrightarrow S,$  $\varphi(x)=\psi(x) \circ f(x)$ is  surjective and $f(x)$ is injective on $\varphi^{-1}(s)$ for each $s \in S.$ 
\end{lemma}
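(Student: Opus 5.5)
Both directions of Lemma~\ref{yuanlocal} are elementary counting and injectivity arguments on finite sets, and I would prove them in that order. The only fact needed is that a map between finite sets of the same cardinality is bijective if and only if it is injective, if and only if it is surjective.

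\emph{Necessity.} Assume $f$ is a bijection of $A$ and let $\psi:A\to S$ be any surjection. Then $\varphi=\psi\circ f$ is a composition of two surjections, so it is surjective. Moreover, $f$ is injective on all of $A$, hence on every subset $\varphi^{-1}(s)$, $s\in S$. This direction is immediate.

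\emph{Sufficiency.} It suffices to use the hypothesis for one surjection $\psi$. Fix such a $\psi$ for which $\varphi=\psi\circ f$ is surjective and $f$ is injective on each fibre $\varphi^{-1}(s)$. We show $f$ is injective on $A$, which gives bijectivity since $A$ is finite.
\begin{itemize}
\item Take $a,b\in A$ with $f(a)=f(b)$.
\item Applying $\psi$ gives $\varphi(a)=\psi(f(a))=\psi(f(b))=\varphi(b)$, so $a$ and $b$ lie in the same fibre $\varphi^{-1}(s)$ with $s=\varphi(a)$.
\item Since $f$ is injective on that fibre, $a=b$.
\end{itemize}
Hence $f$ is injective, and therefore a permutation of $A$.

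In fact, this argument shows injectivity on fibres alone already forces $f$ to be bijective. The surjectivity of $\varphi$ is automatic once $f$ is bijective, so it is consistent with the statement, and it is needed only in the "only if" direction.

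\emph{Edge case.} If $A=\emptyset$, surjectivity forces $S=\emptyset$, and the statement holds trivially. I would mention this for completeness.

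The only point needing care is the logical quantifier. As read, "for any surjection" in the sufficiency direction means the hypothesis holds for all surjections. Since we used just one of them, we should note that at least one surjection exists (for instance, $\psi=\mathrm{id}_A$ with $S=A$) so that the hypothesis is not vacuous. I expect no genuine obstacle here.
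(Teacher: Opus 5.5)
Your argument is correct and is the same one the paper gives in its proof of Lemma~\ref{le1} (its finite-field restatement of this criterion): $f(a)=f(b)$ gives $\varphi(a)=\varphi(b)$, so $a$ and $b$ lie in one fibre and injectivity there forces $a=b$; surjectivity of $\varphi$ plays no role in sufficiency, which is why Lemma~\ref{le1} drops it. One correction to your quantifier remark: $S$ is fixed in the statement, so you cannot take $S=A$, and the ``if'' direction genuinely needs some surjection $A\to S$ to exist (i.e.\ $1\le\sharp S\le\sharp A$ when $A\neq\emptyset$); otherwise the hypothesis is vacuous and, e.g., a constant map on a two-element $A$ with $\sharp S=3$ is a counterexample, so the lemma should be read as ``for some (equivalently, every) surjection, assuming one exists.''
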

$$\xymatrix
{ 	A \ar[rr]^{f}\ar[dr]_{\varphi} & & A\ar[dl]^{\psi}\\
	& S 
}
$$

Although the AGW criterion and the local criterion are equivalent (See \cite{yuan2024local}), we can find from the comparison of Lemma \ref{le1} and Lemma \ref{leagw} that the constraint conditions in the AGW criterion, namely $\sharp S =\sharp \overline{S}, $ $\lambda(x)$ and $\overline{\lambda}(x)$ are surjective such that $\overline{\lambda}(x)\circ f(x)=h(x)\circ \lambda(x),$ and $h(x)$ is bijective from $S$ to $\overline{S}$, are just to ensure that $h^{-1}(x) \circ \bar{\lambda}(x)\circ f(x)= \lambda(x)$ (that is, taking $\varphi(x)=h^{-1} \circ \bar{\lambda}$ and $\psi(x)=\lambda(x)$ in Lemma \ref{le1}). Therefore, the local criterion can better reflect the intrinsic properties of permutation polynomials.

We can, based on the proof  of the local criterion in \cite{yuan2024local}, express the local criterion in the finite fiedls as follows.
\begin{lemma}\label{le1}
	Let $q$  be a prime power and  let $\psi(x), \varphi(x)$ and $f(x)$ be maps  from $\gf_q$ to $\gf_q$ such that $\varphi(x)=\psi(x) \circ f(x)$. Then  $f(x)$ is a bijection if and only if  $f(x)$ is injective on $\varphi^{-1}(s)$ for each $s \in Im(\varphi).$ 
\end{lemma}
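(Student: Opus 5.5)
The plan is a direct argument that uses only the factorization $\varphi=\psi\circ f$ and the finiteness of $\gf_q$. The point is that the fibres of $\varphi$ are at least as coarse as the fibres of $f$: two points identified by $f$ are automatically identified by $\varphi$. Hence injectivity on each fibre of $\varphi$ already forces global injectivity.

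The necessity direction is immediate. If $f$ is a bijection of $\gf_q$, it is injective on all of $\gf_q$, and in particular on every subset $\varphi^{-1}(s)$ with $s\in Im(\varphi)$.

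For sufficiency, assume $f$ is injective on $\varphi^{-1}(s)$ for every $s\in Im(\varphi)$, and take $a,b\in\gf_q$ with $f(a)=f(b)$. Applying $\psi$ gives
\[
\varphi(a)=\psi(f(a))=\psi(f(b))=\varphi(b).
\]
Call this common value $s$. Then $s\in Im(\varphi)$ and both $a$ and $b$ lie in the single fibre $\varphi^{-1}(s)$. Injectivity of $f$ on that fibre gives $a=b$, so $f$ is injective on $\gf_q$. Since $\gf_q$ is finite and $f$ maps it to itself, injectivity is equivalent to bijectivity, which proves the claim.

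No step is a real obstacle. The only point needing care is that the fibres $\varphi^{-1}(s)$, for $s$ ranging over $Im(\varphi)$ (not over all of $\gf_q$), cover the whole domain. This holds because every $a\in\gf_q$ lies in $\varphi^{-1}(\varphi(a))$, so no element escapes the hypothesis. Unlike Lemma \ref{yuanlocal}, the surjectivity of $\psi$ is not needed here.
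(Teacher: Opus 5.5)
Your proof is correct and follows the paper's argument essentially verbatim: apply $\psi$ to $f(a)=f(b)$ to place $a,b$ in the common fibre $\varphi^{-1}(s)$, then use injectivity on that fibre. The one addition is that you state the finiteness step (injective self-map of $\gf_q$ implies bijective) explicitly, which the paper leaves implicit.
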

\begin{proof}
	The necessity is obvious. Now we prove the sufficiency. If $f(a)=f(b)$ for some $a, b \in \gf_q$,  then $$\psi(f(a))=\psi(f(b)).$$
	That is $\varphi(a)=\varphi(b),$ $s \in Im(\varphi).$ Hence, $a, b \in \varphi^{-1}(s).$ Since $f(x)$ is injective on $\varphi^{-1}(s)$ for each  $s \in Im(\varphi)$ and  $f(a)=f(b)$, we get $a=b.$ Therefore, $f(x)$ is a bijection. This completes the proof.
\end{proof}

Based on the local criterion, Yuan \cite{yuan2024local} established a local method that can  determine whether a polynomial is a permutation polynomial while simultaneously deriving its compositional inverse.
%

\begin{lemma}\label{leff-}\cite[Theorem 2.2]{yuan2024local}
	Let $q$ be a prime power and $f(x)$ be a polynomial over $\gf_q.$ Then
	$f(x)$ is a permutation polynomial over $\gf_q$ if and only if there exist nonempty 
	subsets $S_i$, $i=1, 2, \cdots, t$ of $\gf_q$ and maps $\psi_i(x): \gf_q \rightarrow S_i$, $i=1, 2, \cdots, t$ such that $\psi_i(x)\circ f(x)=\varphi_i(x),$ $i=1, 2, \cdots, t$
	and $x=F(\varphi_1(x), \varphi_2(x), \cdots, \varphi_t(x)),$ where $F(x_1, x_2, \cdots, x_t)\in \gf_q[x_1, x_2, \cdots, x_t].$ Moreover, the compositional inverse of $f(x)$ is given by
	$$f^{-1}(x)=F(\psi_1(x), \psi_2(x), \cdots, \psi_t(x)).$$
\end{lemma}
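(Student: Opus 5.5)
The plan is to prove the two directions of Lemma \ref{leff-} separately, using Lemma \ref{le1} for the permutation property and a direct composition for the inverse formula.

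\emph{Sufficiency.} Suppose the maps $\psi_i$, the polynomials $\varphi_i=\psi_i\circ f$, and $F$ are given with $x=F(\varphi_1(x),\dots,\varphi_t(x))$ for all $x\in\gf_q$. If $f(a)=f(b)$, then applying each $\psi_i$ gives $\varphi_i(a)=\psi_i(f(a))=\psi_i(f(b))=\varphi_i(b)$ for every $i$. Hence
$$a=F(\varphi_1(a),\dots,\varphi_t(a))=F(\varphi_1(b),\dots,\varphi_t(b))=b,$$
so $f$ is injective on the finite set $\gf_q$ and therefore bijective. Equivalently, take a single surjection onto $S_1\times\cdots\times S_t$ given by $x\mapsto(\psi_1(x),\dots,\psi_t(x))$; the identity above says that $f$ is injective on each fibre of $(\varphi_1,\dots,\varphi_t)$, and Lemma \ref{le1} applies.

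\emph{Inverse formula.} Set $G(x)=F(\psi_1(x),\dots,\psi_t(x))$. Then
$$G(f(x))=F(\psi_1(f(x)),\dots,\psi_t(f(x)))=F(\varphi_1(x),\dots,\varphi_t(x))=x$$
for all $x\in\gf_q$. Since $f$ is a bijection, $G$ is a left inverse of $f$ as a map, and so it is also a right inverse. Every map $\gf_q\to\gf_q$ is a polynomial, so $G$ reduced modulo $x^q-x$ is the compositional inverse $f^{-1}$.

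\emph{Necessity.} Take $t=1$, $S_1=\gf_q$, and $\psi_1=f^{-1}$ (a polynomial map). Then $\varphi_1=f^{-1}\circ f=x$, and $F(x_1)=x_1$ satisfies $x=F(\varphi_1(x))$.

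The only point needing care is that $F$ is required to be a polynomial. This is harmless, because any function on $\gf_q^t$ is represented by a polynomial via Lagrange interpolation. There is no real obstacle here; the substantive step is the injectivity argument in the sufficiency direction.
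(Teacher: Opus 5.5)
Your proof is correct. The paper gives no proof of its own here: it cites \cite{yuan2024local} and only indicates that the result rests on the local criterion. Your argument is exactly that reasoning. The identity $F(\psi_1,\dots,\psi_t)\circ f=\mathrm{id}$ gives injectivity and the inverse at once, and $t=1$, $\psi_1=f^{-1}$, $F(x_1)=x_1$ gives necessity.

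One small inaccuracy, which your direct injectivity argument does not depend on: the map $x\mapsto(\psi_1(x),\dots,\psi_t(x))$ need not be surjective onto $S_1\times\cdots\times S_t$, and Lemma \ref{le1} as stated concerns maps into $\gf_q$. So the ``equivalently'' remark should be dropped, or rephrased in terms of the image of that map.
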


\section{Main results}
Before proceeding to the proof, we first recall some relevant properties and definitions of linearized polynomials.
Ore \cite{ore1933on} studied the main properties of $p$-polynomials and gave the following definition and results. 

\begin{definition}\cite{ore1933on}
	A  polynomial of the form 
	$$F_p(x)=a_0x^{p^s}+a_1x^{p^{s-1}}+\cdots+a_{s-1}x^p+a_sx$$
	with coefficients in $\gf_{q^n}$ shall be called a $p-$polynomial; the number $s$ is called the exponent of $F_p(x).$ When $a_0=1,$ $F_p(x)$ is said to be reduced. 
	
\end{definition}

The $p-$polynomials form a modulus, since they are reproduced by addition and subtraction. The $p$th power of a $p-$polynomial is again a $p-$polynomial.

Let namely 
$$G_p(x)=b_0x^{p^t}+b_1x^{p^{t-1}}+\cdots+b_{t-1}x^p+b_tx$$ be a second $p-$polynomial over $\gf_{q^n}.$
The ordinary product of two $p-$polynomials need not be a
$p-$polynomial. However, the composition $F_p(G_p(x))$ of two $p-$polynomials $F_p(x),$ $G_p(x)$ over $\gf_{q^n}$ is again a $p-$polynomial. Instead of the word
composition (or substitution) we use the phrase "symbolic multiplication."
Thus, we define symbolic multiplication  as
$$F_p(x) \circ  G_p(x)= F_p(G_p(x)).$$

This multiplication is usually not commutative so that the $p-$polynomials will form a non-commutative ring.  In the ring of $p-$polynomials the symbolic multiplication is associative and distributive with respect to both right-hand and left-hand multiplication. The unit element is $E_p(x)=x$ and there are no divisors of zero, i.e., an identity $A_p(x)B_p(x)=0$ implies $A_p(x)=0$ or $B_p(x)=0.$

\begin{definition}
	A $p-$polynomial $F_p(x)$ is said to be symbolically right-hand divisible by $D_p(x)$ if $F_p(x)=Q_p(x)\circ  D_p(x).$ When $F_p=D_p(x)\circ  Q_p(x)$, we say that $F_p(x)$ is left-hand symbolically divisible by $D_p(x).$
\end{definition}

One observes that when $F_p(x)$ is right-hand symbolically divisible by $D_p(x),$ then $F_p(x)$ is also divisible by $D_p(x)$ in ordinary sense. 

When considering the division for $p-$polynomials, Ore \cite{ore1933on} gave the following results. 

\begin{lemma}\cite[Theorem 1]{ore1933on}\label{leoreth1}
	Symbolic right-hand division of polynomials is always possible. 
\end{lemma}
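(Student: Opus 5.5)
The statement to prove is Lemma \ref{leoreth1}: given $p$-polynomials $F_p(x)$ of exponent $s$ and $D_p(x)\neq 0$ of exponent $t$ over $\gf_{q^n}$, there exist $p$-polynomials $Q_p(x)$ and $R_p(x)$ with
$$F_p(x)=Q_p(x)\circ D_p(x)+R_p(x),$$
where either $R_p=0$ or the exponent of $R_p$ is less than $t$. The plan is to imitate ordinary long division, using the fact that the leading term of a symbolic product is easy to control. The proof is by strong induction on the exponent $s$ of $F_p$.

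\textbf{Base case.} If $F_p=0$ or $s<t$, take $Q_p=0$ and $R_p=F_p$.

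\textbf{Inductive step.} Suppose $s\ge t$. Write $F_p=a_0x^{p^s}+\cdots$ and $D_p=b_0x^{p^t}+\cdots$ with $a_0,b_0\neq 0$. Set
$$c=a_0\,\bigl(b_0^{p^{s-t}}\bigr)^{-1}\in\gf_{q^n}.$$
This is possible because $\gf_{q^n}$ is a field and Frobenius is a bijection on it, so $b_0^{p^{s-t}}\neq 0$.

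\textbf{Key computation.} Compute $cx^{p^{s-t}}\circ D_p(x)=c\,D_p(x)^{p^{s-t}}$. Since raising to the power $p^{s-t}$ is additive in characteristic $p$, this equals
$$c\sum_j b_j^{p^{s-t}}x^{p^{t-j+s-t}},$$
which is again a $p$-polynomial. Its leading term is $c\,b_0^{p^{s-t}}x^{p^s}=a_0x^{p^s}$.

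\textbf{Closing the induction.} It follows that $F_p-cx^{p^{s-t}}\circ D_p$ is a $p$-polynomial, because $p$-polynomials form a module. It is either zero or has exponent less than $s$. Applying the induction hypothesis to it gives $Q'_p$ and $R_p$. Then take $Q_p=cx^{p^{s-t}}+Q'_p$. This step uses right distributivity, $(A+B)\circ D=A\circ D+B\circ D$, which holds trivially since composition on the right is substitution.

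\textbf{Uniqueness and the main obstacle.} Uniqueness, if wanted, follows from the absence of zero divisors and from the degree count $\deg(Q\circ D)=\deg Q\cdot\deg D$. The only delicate point is that left multiplication by $cx^{p^{k}}$ twists the coefficients of $D_p$ by Frobenius. The leading coefficient therefore must be matched with $b_0^{p^{s-t}}$, not with $b_0$. Apart from this twist the argument is routine.
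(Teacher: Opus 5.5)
Your proof is correct and is the standard argument: the paper gives no proof of its own but cites Ore's Theorem 1, and that theorem is proved exactly as you do, by subtracting $a_0\,(b_0^{p^{s-t}})^{-1}x^{p^{s-t}}\circ D_p$ to remove the leading term and repeating until the remainder has exponent below $t$. One small remark: you only need $b_0^{p^{s-t}}\neq 0$, which holds in any field of characteristic $p$, so surjectivity of Frobenius plays no role here (it is what left-hand division would require).
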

Lemma \ref{leoreth1} shows that right-hand Euclid algorithms exist, and this shows in turn the existence of a unique (reduced) cross-cut $\gcd(F_p(x), G_p(x))=D_p(x).$ When $D_p(x)=x,$ we say that $F_p(x)$ and $G_p(x)$ are right-hand symbolically relative prime, and we can then find such polynomials $A_p(x)$ and $B_p(x)$ over $\gf_{q^n}$ of exponents less than $s$ and $t$ respectively that 
$$A_p(x) \circ  F_p(x) +B_p(x)\circ G_p(x)=x.$$

\begin{lemma}\cite[Theorem 2]{ore1933on}\label{leoreth2}
	The symbolical right-hand cross-cut of $F_p(x)$ and $G_p(x)$ is equal to the ordinary cross-cut of these polynomials.
\end{lemma}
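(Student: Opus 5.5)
The plan is to compare the root multisets of the two polynomials in an algebraic closure $\overline{\gf}$. Let $D_p(x)$ be the reduced symbolic right-hand cross-cut of $F_p(x)$ and $G_p(x)$, obtained from the right-hand Euclidean algorithm of Lemma \ref{leoreth1}. Let $d(x)$ be the monic ordinary gcd of $F_p$ and $G_p$. Both are monic, so it suffices to show that $D_p$ and $d$ have the same roots in $\overline{\gf}$ with the same multiplicities.

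\textbf{Step 1: multiplicities of roots of a $p$-polynomial.} For a nonzero $p$-polynomial $L(x)$, let $p^{k_L}$ be the degree of its lowest nonzero term. I will prove that every root $\alpha$ of $L$ has multiplicity exactly $p^{k_L}$. Additivity gives $L(x+\alpha)=L(x)+L(\alpha)=L(x)$, so the multiplicity of $\alpha$ equals the multiplicity of $0$. The latter is visibly $p^{k_L}$. In particular, the root set $V_L$ of $L$ is an additive group, and $L=\prod_{\alpha\in V_L}(x-\alpha)^{p^{k_L}}$ up to the leading coefficient. Consequently $d=\prod_{\alpha\in V_F\cap V_G}(x-\alpha)^{p^{m}}$ with $m=\min(k_F,k_G)$.

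\textbf{Step 2: roots of $D_p$.} First, $V_D\subseteq V_F\cap V_G$. This holds because $F_p=Q_p\circ D_p$ and $G_p=R_p\circ D_p$, and right-hand symbolic divisibility implies ordinary divisibility, as remarked before Lemma \ref{leoreth1}. Conversely, back-substitution in the Euclidean algorithm gives a Bezout identity $A_p\circ F_p+B_p\circ G_p=D_p$. For $\alpha\in V_F\cap V_G$ this yields $D_p(\alpha)=A_p(0)+B_p(0)=0$. Hence $V_D=V_F\cap V_G$.

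\textbf{Step 3: matching multiplicities.} This is the step I expect to need the most care, since in general the coefficient $a_s$ of $x$ may vanish and the polynomials need not be separable. Ordinary divisibility $D_p\mid F_p$ and $D_p\mid G_p$ gives $k_D\le m$. For the reverse inequality I use the Bezout identity again. Every monomial of $F_p$ has degree at least $p^{k_F}$, and composing with the $p$-polynomial $A_p$ only raises degrees to $p$-power multiples. So all terms of $A_p\circ F_p$ have degree $\ge p^{k_F}$, and similarly all terms of $B_p\circ G_p$ have degree $\ge p^{k_G}$. Hence the lowest term of $D_p$ has degree $\ge p^{m}$, i.e.\ $k_D\ge m$. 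Combining this with Steps 1 and 2 gives $D_p=\prod_{\alpha\in V_F\cap V_G}(x-\alpha)^{p^m}=d$, which completes the proof.
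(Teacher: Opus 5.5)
Your proof is correct, but it takes a different route from the one behind the cited result. The paper gives no proof of its own; it cites Ore, and the sentence after the lemma (``the symbolic Euclid algorithm can therefore also be considered as an ordinary Euclid algorithm'') points to Ore's argument. In a symbolic right division $F_p=Q_p\circ G_p+R_p$, the term $Q_p(G_p(x))$ is an ordinary multiple of $G_p(x)$, because $Q_p$ has no constant term. Also $\deg R_p<\deg G_p$, because the exponent of $R_p$ is smaller. So $R_p$ is exactly the ordinary remainder of $F_p$ modulo $G_p$. Hence the symbolic and ordinary Euclidean algorithms generate the same remainder sequence, and their last nonzero remainders agree up to the normalizing scalar. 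That argument is a few lines and purely formal. It needs neither a Bezout identity nor the passage to $\overline{\mathbb{F}}$ with the multiplicity bookkeeping of your Steps 1 and 3, and it also shows that all intermediate remainders coincide. What your root-space comparison buys instead is the explicit structure of the cross-cut, $D_p=\prod_{\alpha\in V_F\cap V_G}(x-\alpha)^{p^{\min(k_F,k_G)}}$: its roots are exactly the common roots of $F_p$ and $G_p$ in $\overline{\mathbb{F}}$, all with the same multiplicity. Your lowest-degree observation in Step 3 is also informative for how the paper uses this lemma. If neither $F_p$ nor $G_p$ has a nonzero $x$-coefficient (e.g.\ $F_p=G_p=x^p$), then no identity $A_p\circ F_p+B_p\circ G_p=x$ can hold in $\gf_{q^n}[x]$, so the Bezout relation in Lemma \ref{lekerpp} is valid only modulo $x^{q^n}-x$.
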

Lemma \ref{leoreth2} shows that every symbolic right-hand divisor is also an ordinary divisor of a polynomial and the symbolic Euclid algorithm 
can therefore also be considered as an ordinary Euclid algorithm. Then we have the following result. 

\begin{lemma}\label{legcd}
	Let $F_p(x)$ and $G_p(x)$ be $p-$polynomials over $\gf_{q^n}$ of the exponents $s$ and $t$ respectively. If $\gcd(F_p(x), G_p(x))=D_p(x)$, where the exponent of $D_p(x)$ is $l,$  then there exist two $p-$polynomials $A_p(x)$ and $B_p(x)$ over $\gf_{q^n}$ of exponents less than $s-l$ and $t-l$ respectively such that 
	$$A_p(x)\circ  F_p(x)+B_p(x)\circ  G_p(x)=D_p(x).$$ 
\end{lemma}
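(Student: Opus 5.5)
Lemma \ref{legcd} follows from Lemmas \ref{leoreth1} and \ref{leoreth2} by running the right-hand symbolic Euclidean algorithm on $F_p$ and $G_p$ and tracking exponents. We may assume $s\ge t$. By Lemma \ref{leoreth1}, right-hand division with remainder is always possible: we write $F_p=Q_1\circ G_p+R_1$ with $R_1$ a $p$-polynomial of exponent less than $t$ (or $R_1=0$), then $G_p=Q_2\circ R_1+R_2$, and so on. The exponents strictly decrease, so the process terminates at some last nonzero remainder $R_k$. Since composition distributes over addition, every common right-hand divisor of two consecutive terms divides the next one, and conversely. Hence $R_k$ is the symbolic right-hand cross-cut. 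By Lemma \ref{leoreth2}, after normalising $R_k$ to be reduced (multiplying on the left by a constant, i.e. composing with $cx$), it equals the ordinary $\gcd(F_p,G_p)=D_p$, which has exponent $l$.

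Next we express each remainder in terms of $F_p$ and $G_p$. Define $A^{(-1)}=x$, $B^{(-1)}=0$, $A^{(0)}=0$, $B^{(0)}=x$, and for $i\ge1$
\[
A^{(i)}=A^{(i-2)}-Q_i\circ A^{(i-1)},\qquad B^{(i)}=B^{(i-2)}-Q_i\circ B^{(i-1)}.
\]
Using associativity and left and right distributivity of symbolic multiplication, induction gives $R_i=A^{(i)}\circ F_p+B^{(i)}\circ G_p$. Taking $i=k$ and composing on the left with the normalising constant yields $A_p\circ F_p+B_p\circ G_p=D_p$.

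The remaining step, and the main obstacle, is the exponent bounds. Exponents add under composition: the exponent of $Q_i$ is $e_{i-2}-e_{i-1}$, where $e_{-1}=s$, $e_0=t$ and $e_i$ is the exponent of $R_i$. By induction, the exponent of $A^{(i)}$ is exactly $t-e_{i-1}$ and that of $B^{(i)}$ is exactly $s-e_{i-1}$ for $i\ge1$, because the term $Q_i\circ A^{(i-1)}$ dominates $A^{(i-2)}$. At $i=k$ this gives exponents $t-e_{k-1}$ and $s-e_{k-1}$, with $e_{k-1}>e_k=l$. Hence the exponents are less than $t-l$ and $s-l$ respectively.

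This matches the statement after swapping the roles of the coefficients. The statement asks for the coefficient of $F_p$ to have exponent below $s-l$, and our $A_p$ has exponent below $t-l$. Two cases remain:
\begin{itemize}
\item If $s\ge t$, then $t-l\le s-l$, so both bounds in the statement hold.
\item If $t\ge s$, we run the algorithm with the roles of $F_p$ and $G_p$ exchanged. The same argument then gives the bound $s-l$ for the coefficient of $G_p$ and $t-l$ for the coefficient of $F_p$, and since $s\le t$ these again satisfy the required bounds.
\end{itemize}

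Finally we handle the degenerate cases. If $F_p$ or $G_p$ is a right divisor of the other, the algorithm stops immediately. If, say, $D_p=G_p$ up to a constant, we take $A_p=0$ and $B_p=c^{-1}x$. Its exponent is $0$, which is less than $t-l$ only if $t>l$; so when $t=l$ the strict bound must be read with the convention that the zero polynomial, or the exponent-$0$ polynomial $cx$, is admissible. We will state this convention explicitly.
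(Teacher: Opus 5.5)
The paper gives no argument for this lemma beyond citing Ore's right-hand division theorem and the equality of symbolic and ordinary cross-cuts. Your extended right-hand Euclidean algorithm is therefore the intended route, and your bookkeeping is right: at the last step the coefficients have exponents exactly $t-e_{k-1}$ and $s-e_{k-1}$. So the coefficient of $F_p$ has exponent less than $t-l$, and the coefficient of $G_p$ has exponent less than $s-l$.

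The genuine gap is the final step, where you convert this into the stated bounds (coefficient of $F_p$ below $s-l$, coefficient of $G_p$ below $t-l$). When $s\ge t$, the inequality $t-l\le s-l$ only handles $A_p$. For $B_p$ you know only that its exponent is below $s-l$, which does not give below $t-l$ once $s>t$. Exchanging $F_p$ and $G_p$ changes nothing, because the Euclidean bound is symmetric under relabelling: in either order, each coefficient is bounded by the exponent of the \emph{other} polynomial minus $l$. So in each of your two bullets one required bound is simply not proved.

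This cannot be repaired, because the bounds as literally stated are false when $s\neq t$. Suppose $l<\min(s,t)$. Then neither coefficient can vanish, since $B_p\circ G_p=D_p$ alone would need exponent $l<t$. Hence the leading terms of $A_p\circ F_p$ and $B_p\circ G_p$ must cancel. This forces $a+s=b+t$, where $a,b$ are the exponents of $A_p,B_p$, so the polynomial of larger exponent must carry the coefficient of smaller exponent.

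Concretely, take $F_p=x^{p^2}$ and $G_p=x^p+x$, so $s=2$, $t=1$, $D_p=x$, $l=0$. The statement demands $B_p=bx$ and $A_p$ of exponent at most $1$. If $A_p\neq 0$, then $A_p\circ F_p$ contains a term in $x^{p^2}$ or $x^{p^3}$ that $b(x^p+x)$ cannot cancel. If $A_p=0$, then $b(x^p+x)\neq x$.

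What you actually proved is the correct form of the lemma: the bounds with the roles of $s$ and $t$ interchanged, excluding the degenerate case where $F_p$ and $G_p$ are both constant multiples of $D_p$. It is also all the paper needs, since its only application, Lemma~\ref{lekerpp}, uses just the existence of $A_p$ and $B_p$. You should state and prove that corrected version rather than claim the bounds as written.
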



\begin{lemma}\label{lekerpp}
Let $F_p(x), G_p(x) $ be $p-$polynomials over $\gf_{q^n}.$  Then 	
$\ker(F_p)\cap \ker(G_p)=\{0\}$ if and only if there exist two $p-$polynomials $A_p(x)$ and $B_p(x)$ over $\gf_{q^n}$ such that 
$$
A_p(x)\circ  F_p(x)+B_p(x)\circ  G_p(x)=x.
$$
\end{lemma}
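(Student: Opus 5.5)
The plan is to reduce the kernel condition to a statement about ordinary greatest common divisors, and then apply Lemma \ref{legcd} with $D_p(x)=x$.

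\textbf{Preliminary normalization.} Both $F_p$ and $G_p$ are $p$-polynomials over $\gf_{q^n}$, so every root is a root of $x^{q^n}-x$ only if we read the kernels over $\gf_{q^n}$. We therefore take $\ker(F_p)$ to mean the zero set of $F_p$ in $\gf_{q^n}$. Since $x^{q^n}-x$ is itself a $p$-polynomial, we may replace $F_p$ by the symbolic right gcd $\gcd(F_p(x), x^{q^n}-x)$. By Lemma \ref{leoreth2} this equals the ordinary gcd, i.e. $\prod_{a\in\ker F_p}(x-a)$, which is again a $p$-polynomial. Moreover, by Lemma \ref{legcd} it has the form $A'\circ F_p+B'\circ(x^{q^n}-x)$. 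We do the same for $G_p$. Reducing modulo $x^{q^n}-x$ (which is harmless, since it vanishes on $\gf_{q^n}$) then reduces everything to the separable polynomials $\tilde F=\prod_{a\in\ker F_p}(x-a)$ and $\tilde G=\prod_{b\in\ker G_p}(x-b)$.

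\textbf{Sufficiency.} Suppose $A_p\circ F_p+B_p\circ G_p=x$. If $a\in\ker F_p\cap\ker G_p$, then evaluating both sides at $a$ gives $A_p(0)+B_p(0)=a$. Since $p$-polynomials vanish at $0$, this forces $a=0$. This direction is immediate.

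\textbf{Necessity.} Assume $\ker F_p\cap\ker G_p=\{0\}$. The ordinary gcd of $\tilde F$ and $\tilde G$ is $\prod_{c\in\ker\tilde F\cap\ker\tilde G}(x-c)=x$, because both polynomials are separable. By Lemma \ref{leoreth2} the symbolic right cross-cut of $\tilde F$ and $\tilde G$ is therefore $x$. Lemma \ref{legcd} then gives $p$-polynomials $A,B$ with $A\circ\tilde F+B\circ\tilde G=x$. Substituting $\tilde F=A'\circ F_p+B'\circ(x^{q^n}-x)$ and the analogous expression for $\tilde G$ yields
$$(A\circ A')\circ F_p+(B\circ A'')\circ G_p+C\circ(x^{q^n}-x)=x$$
for some $p$-polynomial $C$.

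\textbf{Main obstacle.} The remaining term $C\circ(x^{q^n}-x)$ has to be handled, and I see two possible readings. If the identity is only required as functions on $\gf_{q^n}$ (modulo $x^{q^n}-x$), this term vanishes and we are done. If instead the identity must hold as polynomials with no reduction, then I would avoid the reduction step altogether and argue directly: when $F_p$ and $G_p$ are separable (nonzero linear coefficient), the ordinary gcd of $F_p$ and $G_p$ is $\prod(x-c)$ over common roots in the algebraic closure. In that setting the hypothesis must be read as intersection over $\overline{\gf_q}$, and Lemmas \ref{leoreth2} and \ref{legcd} give the identity directly. I would state the lemma with kernels taken in $\gf_{q^n}$ and the identity read modulo $x^{q^n}-x$, which is all that is needed for the inverse formula.
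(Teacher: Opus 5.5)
Your argument is correct, but it takes a different route from the paper's proof.

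\textbf{The paper's route.} The paper works directly with $D_p=\gcd(F_p,G_p)$. Since $D_p$ divides both polynomials, any nonzero root of $D_p$ in $\mathbb{F}_{q^n}$ would lie in $\ker(F_p)\cap\ker(G_p)$. Hence $D_p$ permutes $\mathbb{F}_{q^n}$. The paper then writes one Bezout identity $O_p\circ F_p+Q_p\circ G_p=D_p$ and composes it on the left with the compositional inverse $D_p^{-1}$, which is again a $p$-polynomial.

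\textbf{Your route.} You first replace $F_p$ and $G_p$ by their gcds with $x^{q^n}-x$, that is, by the subspace polynomials of their $\mathbb{F}_{q^n}$-kernels. These are separable and split over $\mathbb{F}_{q^n}$, so their gcd is exactly $x$. Three Bezout identities then finish the job, leaving an error term $C\circ(x^{q^n}-x)$ that vanishes modulo $x^{q^n}-x$.

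\textbf{Comparison.} The paper's version is shorter, since it uses a single Bezout identity. It relies on the standard fact, not proved there, that the inverse of a linearized permutation is a $p$-polynomial. It also leaves implicit that $D_p^{-1}\circ D_p=x$ holds only modulo $x^{q^n}-x$. Your version needs no compositional inverse and makes the modular reading explicit.

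\textbf{You need not hedge between two readings.} The literal polynomial identity is false. For $F_p=G_p=x^p$ the kernels meet only in $0$, yet $A_p(x^p)+B_p(x^p)$ never has an $x$ term. So the reading with kernels in $\mathbb{F}_{q^n}$ and the identity modulo $x^{q^n}-x$ is the correct one. It is also exactly what the paper's proof delivers and what Theorem \ref{thmain} uses. Your side remark about separable polynomials with kernels over the algebraic closure is a different statement and is not needed.
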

\begin{proof}
We first prove the necessity.
Let $\gcd(F_p(x), G_p(x))=D_p(x). $ According to  Lemma \ref{leoreth2},   we have 	\begin{equation} \label{leeqdivi}
	D_p(x) \mid F_p(x) \,\, \text{and} \,\,  D_p(x) \mid G_p(x)
\end{equation}  Our next objective is to demonstrate that $D_p(x)$ is a permutation polynomial over $\gf_{q^n}.$  
Suppose, for the sake of contradiction, that  $D_p(x)$ is not a permutation polynomial over $\gf_{q^n}.$  Then there must exist some non-zero element $\alpha \in \gf_{q^n} $ such that $D_p(\alpha)=0$. It follows from (\ref{leeqdivi}) that   $\alpha \in \ker(F_p)\cap \ker(G_p),$   which clearly contradicts our assumptions.
Consequently, $D_p(x)$ is indeed a permutation polynomial over $\gf_{q^n}.$  Let $D_p^{-1}(x)$ is the compositional inverse of $D_p(x)$ over $\gf_{q^n}.$ Then $D_p^{-1}(x)$ is also a $p-$ polynomial over $\gf_{q^n}.$
Since $\gcd(F_p(x), G_p(x))=D_p(x)$, then there exist two $p-$polynomials $O_p(x)$ and $Q_p(x)$ over $\gf_{q^n}$ such that 
\begin{equation}\label{eqgcd}
	O_p(x)\circ  F_p(x)+Q_p(x)\circ  G_p(x)=D_p(x)
\end{equation} by Lemma \ref{legcd}.  Hence, by \eqref{eqgcd}, we have $$	D_p^{-1}(x) \circ O_p(x)\circ  F_p(x)+ 	D_p^{-1}(x) \circ Q_p(x)\circ  G_p(x)=	D_p^{-1}(x) \circ D_p(x)=x.$$ We arrive at the desired conclusion.

Conversely, assume that  
there exist two $p-$polynomials $A_p(x)$ and $B_p(x)$ over $\gf_{q^n}$  such that 
$$A_p(x)\circ  F_p(x)+B_p(x)\circ  G_p(x)=x.$$ 
For any  $\beta  \in \ker(F_p)\cap \ker(G_p),$ we have $$\beta=A_p(\beta)\circ  F_p(\beta)+B_p(\beta)\circ  G_p(\beta)=0.$$ Hence,   	$\ker(F_p)\cap \ker(G_p)=\{0\}.$
This completes the proof. 
\end{proof}

We present a new expression of Theorem \ref{thmagw1} according to Lemma \ref{le1} and calculate the compositional inverse  of $f(x)$ over $\gf_{q^n}.$
\begin{theorem} \label{thmain}

Consider any polynomial $g(x)\in \gf_{q^n}[x],$ any additive polynomials $\psi(x), \varphi(x)\in \gf_{q^n}[x]$,  any polynomial $ \bar{\psi}(x)\in \gf_{q^n}[x]$  satisfying $\bar{\psi}(x) \circ f(x)=\psi(x)$, and any polynomial $h(x) \in \gf_{q^n}[x] $
such that $h\left(\psi(\gf_{q^n})\right)\subseteq \gf_q^* .$ Then 
$$f(x)=h(\psi(x))\varphi(x)+g(\psi(x))$$ permutes $\gf_{q^n}$ if and only if 
${\rm ker}(\psi)\cap {\rm ker}(\varphi)=\{0\}. $  Moreover, if $f(x)$ is a permutation polynomial over $\gf_{q^n},$ the  compositional inverse of $f(x)$ over $\gf_{q^n}$ is given by 
$$f^{-1}(x)=u\left(\frac{x-g(\bar{\psi}(x)}{h(\bar{\psi}(x)})\right)+v\left(\bar{\psi}(x)\right),$$ where $u(x)$ and $v(x)$ are polynomials over $\gf_{q^n}$ such that $x=u\left(\varphi(x)\right)+v\left(\psi(x)\right).$
\end{theorem}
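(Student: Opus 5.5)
The plan is to apply the local method, Lemma \ref{le1} and Lemma \ref{leff-}, with the single map $\bar{\psi}$. The hypothesis $\bar{\psi}\circ f=\psi$ is the identity $\psi_1\circ f=\varphi_1$ required there, with $\psi_1=\bar{\psi}$ and $\varphi_1=\psi$. So $f$ permutes $\gf_{q^n}$ if and only if $f$ is injective on each fibre $\psi^{-1}(s)$, $s\in \psi(\gf_{q^n})$.

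\textbf{Characterization.} On a fibre $\psi^{-1}(s)$ we have
$$f(x)=h(s)\varphi(x)+g(s),$$
and $h(s)\in\gf_q^*$ is a nonzero constant. Hence $f$ is injective on the fibre iff $\varphi$ is injective on it. The fibre is a coset $x_0+\ker(\psi)$, and $\varphi$ is additive. So $\varphi(x_0+k_1)=\varphi(x_0+k_2)$ iff $k_1-k_2\in\ker(\varphi)$, with $k_1-k_2\in\ker(\psi)$ automatically. Thus injectivity on every fibre is equivalent to $\ker(\psi)\cap\ker(\varphi)=\{0\}$. Lemma \ref{le1} then gives the equivalence.

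\textbf{Inverse formula.} Assume $\ker(\psi)\cap\ker(\varphi)=\{0\}$. Since additive polynomials over $\gf_{q^n}$ can be reduced to $p$-polynomials modulo $x^{q^n}-x$, Lemma \ref{lekerpp} supplies $p$-polynomials $u=A_p$ and $v=B_p$ with $u(\varphi(x))+v(\psi(x))=x$ on $\gf_{q^n}$. Now write $x$ as a polynomial in functions of $f(x)$:
\begin{itemize}
\item $\bar{\psi}(f(x))=\psi(x)$, which recovers $\psi(x)$;
\item $\varphi(x)=\dfrac{f(x)-g(\psi(x))}{h(\psi(x))}$, which is legitimate because $h(\psi(x))\in\gf_q^*$, so division is multiplication by $h(\psi(x))^{q-2}$.
\end{itemize}
Therefore
$$x=u\!\left(\frac{f(x)-g(\bar{\psi}(f(x)))}{h(\bar{\psi}(f(x)))}\right)+v(\bar{\psi}(f(x))),$$
that is, $F\circ f=\mathrm{id}$ for the displayed $F$. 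Since $f$ is a bijection, $F=f^{-1}$; this is exactly the conclusion of Lemma \ref{leff-} with $t=1$.

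\textbf{Expected obstacle.} The main care point is the existence of $u,v$ for general additive (not merely $q$-linear) $\psi,\varphi$. I would handle this by observing that Lemma \ref{lekerpp} is stated for $p$-polynomials, and every additive polynomial over $\gf_{q^n}$ is a $p$-polynomial.

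A second point is to check that $F$ need only be evaluated on the image. The argument $\bar{\psi}(y)$ for $y=f(x)$ always lies in $\psi(\gf_{q^n})$, so $h(\bar{\psi}(y))\neq 0$ for every $y\in\gf_{q^n}$, because $f$ is surjective and hence every $y$ has this form. The division in the formula is therefore well defined everywhere.
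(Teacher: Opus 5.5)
Your proof is correct and follows the paper's route. Lemma \ref{le1} applied with $\bar{\psi}$ reduces bijectivity to injectivity on the fibres of $\psi$, Lemma \ref{lekerpp} supplies $u,v$, and recovering $\psi(x)=\bar{\psi}(f(x))$ and $\varphi(x)=(f(x)-g(\bar{\psi}(f(x))))/h(\bar{\psi}(f(x)))$ then yields $f^{-1}$ by the local method. The only differences are minor: you prove the fibre-injectivity equivalence directly where the paper cites AGW, and the paper applies Lemma \ref{leff-} with $t=2$ ($\psi_1=\bar{\psi}$, $\psi_2(x)=(x-g(\bar{\psi}(x)))/h(\bar{\psi}(x))$) rather than your ``$t=1$'', though your direct check $F\circ f=\mathrm{id}$ makes that label immaterial.
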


\begin{proof}
It has been shown  in  \cite{akbary2011constructing} that  $f(x)$ is injective on $\psi^{-1}(s)$ for each $s \in Im(\psi)$ if and only if  	${\rm ker}(\psi)\cap {\rm ker}(\varphi)=\{0\}$. Then $f(x)$ permutes $\gf_{q^n}$ if and only if 
${\rm ker}(\psi)\cap {\rm ker}(\varphi)=\{0\}$ by Lemma \ref{le1}. Moreover,  if ${\rm ker}(\psi)\cap {\rm ker}(\varphi)=\{0\},$ then there exist polynomials $u(x)$ and $v(x)$ over $\gf_{q^n}$  such that 
\begin{equation}\label{thmeq1x}
	x=u\left(\varphi(x)\right)+v\left(\psi(x)\right)
\end{equation}
by Lemma \ref{lekerpp}.

Taking $\psi_1(x)=\bar{\psi}(x),$ $\psi_2(x)=\frac{x-g(\bar{\psi}(x)}{h(\bar{\psi}(x)}$, $\varphi_1(x)=\bar{\psi}(x)\circ f(x)$ and $\varphi_2(x)=\varphi(x)=\left(\frac{x-g(\bar{\psi}(x)}{h(\bar{\psi}(x)}\right)\circ f(x)$ in  Lemma \ref{yuanlocal}, it follows from Lemma \ref{yuanlocal} and \eqref{thmeq1x} that the compositional inverse of $f(x)$ over $\gf_{q^n}$ is given by $$f^{-1}(x)=u\left(\frac{x-g(\bar{\psi}(x)}{h(\bar{\psi}(x)})\right)+v\left(\bar{\psi}(x)\right).$$
This completes the proof. 
\end{proof}
\begin{remark}
	Theorem \ref{thmagw1} is the classical AGW criterion, which provides necessary and sufficient conditions for a polynomial of the form $f(x)=h(\psi(x))\varphi(x)+g(\psi(x))$ to permute $\mathbb{F}_{q^n}$, subject to the compatibility conditions $\varphi\circ\psi=\bar{\psi}\circ\varphi$ and $\#\psi(\mathbb{F}_{q^n})=\#\bar{\psi}(\mathbb{F}_{q^n})$. In contrast, Theorem \ref{thmain} is derived from the local criterion, and it removes these two compatibility constraints entirely. As a consequence, Theorem \ref{thmain} is applicable to a broader class of permutation polynomials of the form \eqref{eq:general-form} than Theorem \ref{thmagw1}; in this sense, the latter is a special case of the former.
\end{remark}

\section*{Declarations}

\begin{itemize}
	\item \textbf{ Conflicts of Interest} There is no conflict of interest.
	\item
	The research of Danyao Wu is partially supported by the  National Natural Science Foundation of China (Grant No. 12501006).	The research of Pingzhi Yuan is partially supported by the National Natural Science Foundation of China (Grant Nos. 12571003 and 12171163) and the Guangdong Basic and Applied Basic Research Foundation (Grant No. 2024A1515010589). The research of Zilong He is partially supported by the  National Natural Science Foundation of China (Grant No. 12301013).
\end{itemize}

\bibliography{sn-bibliography}

\end{document}